\numberwithin{equation}{section}
\newcommand{\R}{{\mathbb{R}}}
\def\div{ \hbox{\rm div}\,  }
\def\u{ \mathbf{u} }
\def\n{ \mathbf{n} }
\def\b{ \mathbf{B} }
\def\T{ \mathbb{T} }
\newcommand{\Z}{{\mathbb{Z}}}
\def\nn{\nonumber}
\newtheorem{theorem}{Theorem}[section]
\newtheorem{lemma}[equation]{Lemma}
\theoremstyle{remark}
\newtheorem{remark}[theorem]{Remark}
\numberwithin{equation}{section}
\newcommand{\norm}[2]{\left\lVert #1 \right\rVert_{#2}}
\begin{document}

\title[Stability for the 2D  MHD system ]{Stability for the 2D incompressible MHD
equations with only magnetic diffusion}
\author[Xiaoping Zhai]{Xiaoping Zhai }

\address{ School  of Mathematics and Statistics, Guangdong University of Technology,
	Guangzhou, 510520,   China}

\email{pingxiaozhai@163.com }
%(Corresponding author:X. Zhai)
\date{\today}

\begin{abstract}
This paper presents a global stability result on perturbations near a
background magnetic field to the 2D incompressible magnetohydrodynamic (MHD) equations with only magnetic diffusion on the periodic domain. The stability result provides a
significant example for the stabilizing effects of the magnetic
field on electrically conducting fluids. In addition, we obtain
an explicit large-time decay rate of the solutions.
	\end{abstract}
\maketitle
%\noindent {\bf Key Words:}
%{MHD; Global  solutions; Diophantine condition}

%\noindent {\bf Mathematics Subject Classification (2020)} {35Q35, 76N10, 35B40}
\section{ Introduction and main result}
In this paper, we are concerned with the stability of the smooth solutions to
 the following inviscid MHD system
\begin{eqnarray}\label{m1}
\left\{\begin{aligned}
&\partial_t \u+ \u\cdot\nabla \u+\nabla p=\b\cdot\nabla \b,\\
&\partial_t \b-\Delta\b+\u\cdot\nabla \b =\b\cdot\nabla \u,\\
&\div \u =\div \b =0,\\
&(\u,\b)|_{t=0}=(\u_0,\b_0).
\end{aligned}\right.
\end{eqnarray}
Here, $x=(x_1,x_2)\in \T^2$ and $t\geq 0$ are the space and time variables, respectively. The unknown $\u$ is the velocity field, $\b$
is the magnetic field, $ {p}$ is the scalar pressure, respectively.
The MHD system with only magnetic diffusion models many significant phenomenon such as the magnetic reconnection in astrophysics and geomagnetic dynamo in geophysics (see, e.g., \cite{EPTF}). For more physical background, we refer to \cite{HC}, \cite{TGCDP}, \cite{DL}, \cite{LDEM}, \cite{ST}.

Due to the nonlinear interaction between
the fluid velocity and the magnetic field, the MHD equations can accommodate much richer phenomena
than the Navier-Stokes or Euler equations alone. One significant example is that the magnetic field can actually stabilize the fluid motion \cite{bee}. The MHD system has always been of great interest in mathematics. There are many works  devoted to the global well-posedness
of the MHD system with whole diffusion or partial diffusion, see \cite{HAPZ}, \cite{CBCS}, \cite{CL}--\cite{CWY}, \cite{dongboqing}, \cite{fefferman1}--\cite{JNWXY},
\cite{lijinlu} --\cite{PZZ}, \cite{REN}, \cite{RXZ2}, \cite{wujiahong1}, \cite{wujiahong2}, \cite{wei1}, \cite{wei2}, \cite{LXPZ}, \cite{TZ} and the references therein.

In the case of our consideration, namely the incompressible MHD system with zero
viscosity and  positive resistivity, it is still an open problem whether or not there exists global
classical solutions even in $\R^2$ for generic smooth small initial data. It  made some progress only recently by Zhou and Zhu \cite{zhuyijmp}, Wei and Zhang \cite{wei3}  on a periodic domain, see also an improvement work of \cite{wei3} in \cite{yeweikui}.
More precisely, Zhou and Zhu \cite{zhuyijmp} studied the global existence of classical solutions to system \eqref{m1} in $\T^2$. The proof \cite{zhuyijmp} depends heavily on a time-weighted energy estimate and the assumptions that the initial magnetic field is close enough to an equilibrium state and the initial data have reflection symmetry.
As the first global well-posedness result, Wei and Zhang \cite{wei3} proved the global solutions of \eqref{m1} in $\T^2$ with small initial data,
without the non-trivial background magnetic field assumption. However, the solutions constructed in \cite{wei3} may grow in time, especially,
the  $\|\nabla\u\|_{L^\infty}$ will grow exponentially in time. Hence, the stability of the  solutions constructed in \cite{wei3} is still unknown.

Inspired by \cite{gerard}, \cite{zhangzhifei}, \cite{wei3},  and \cite{zhuyijmp}, the contribution of this paper is the global existence and uniqueness of solutions
of \eqref{m1} with sufficiently smooth initial data $(\u_0, \b_0) $ close to the equilibrium state
$(\mathbf{0}, \n)$, where ${\n}\in\R^2$ satisfies the so called Diophantine condition: for any $\mathbf{k}\in\Z^2\setminus \{0\},$
\begin{align}\label{diufantu}
|\n\cdot \mathbf{k}|\ge \frac{c}{|\mathbf{k}|^r}, \quad\hbox{for some $c>0$ and $r>1$.}
\end{align}
Moreover, as demonstrated in \cite{gerard} and \cite{zhangzhifei},
for almost all vectors in ${\mathbb R}^2$,  there exist $c=c(\n)$ so  that the Diophantine condition
(\ref{diufantu}) holds.

For the simplicity, we still use the notation $\b$ to denote the perturbation $\b -{\n}$. Hence, the perturbed equations can be rewritten  into
\begin{eqnarray}\label{m}
\left\{\begin{aligned}
&\partial_t \u+ \u\cdot\nabla \u+\nabla p={\n}\cdot\nabla \b+\b\cdot\nabla \b,\\
&\partial_t \b-\Delta\b+\u\cdot\nabla \b =\mathbf{n}\cdot\nabla \u+\b\cdot\nabla \u,\\
&\div \u =\div \b =0,\\
&(\u,\b)|_{t=0}=(\u_0,\b_0).
\end{aligned}\right.
\end{eqnarray}

The   main result of the paper is stated as follows.
\begin{theorem}\label{dingli}
Assume $\n$ satisfies the Diophantine condition (\ref{diufantu}).
Let  $\alpha>0, \beta>0$ be  two arbitrarily fixed constants.
  For any  ${N}\ge (2\beta+3)r+\alpha+2\beta+5$ with $r>1$, and $(\u_0,\b_0)\in H^{N}(\T^2)$ with
\begin{align*}%\label{pingjun}
\int_{\T^2}\u_0\,dx=\int_{\T^2}\b_0\,dx=0.
\end{align*}
 If there exists a small constant $\varepsilon$ such that
\begin{align*}
\norm{\u_0}{H^{N}}+\norm{\b_0}{H^{N}}\le\varepsilon.
\end{align*}
Then the system \eqref{m} admits a  global  solution $( \u,\b)\in C([0,\infty );H^{N})$. Moreover, for any $t\ge 0$ and $r+\alpha+3\le\gamma\le N $,
there holds
\begin{align*}%\label{ping44}
\norm{\u(t)}{H^{\gamma}}+\norm{\b(t)}{H^{\gamma}}\le C(1+t)^{-\frac{({N}-\gamma)(\beta+1)}{{N}-r-\alpha-3}}.
\end{align*}
\end{theorem}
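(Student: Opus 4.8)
The guiding principle is that, although the velocity equation carries no dissipation, its coupling to the diffusive magnetic field through the background term $\mathbf{n}\cdot\nabla$ generates a hidden, anisotropic damping for $\u$. This is already visible at the linear level: projecting off the pressure and combining the two linearized equations gives, for the divergence-free velocity, the damped wave equation $\partial_{tt}\u - \Delta\partial_t\u - (\mathbf{n}\cdot\nabla)^2\u = 0$. In Fourier variables the characteristic equation $\lambda^2 + |\mathbf{k}|^2\lambda + (\mathbf{n}\cdot\mathbf{k})^2 = 0$ has a slowly damped root of size $-(\mathbf{n}\cdot\mathbf{k})^2/|\mathbf{k}|^2$, which is nondegenerate precisely because the Diophantine condition \eqref{diufantu} forbids $\mathbf{n}\cdot\mathbf{k}$ from vanishing and quantifies its smallness by $|\mathbf{n}\cdot\mathbf{k}|\ge c|\mathbf{k}|^{-r}$. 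The entire argument is organized around realizing this mechanism nonlinearly, and around the fact that the damping costs $r$ derivatives.

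After a standard local existence result reducing matters to a global a priori bound, I would replace the bare Sobolev energy — whose time derivative only exposes the magnetic dissipation $\|\nabla\b\|_{H^N}^2$, the two background cross terms cancelling by skew-adjointness of $\mathbf{n}\cdot\nabla$ — by a modified Lyapunov energy
\begin{align*}
\mathcal{E} = \|\u\|_{H^N}^2 + \|\b\|_{H^N}^2 + \delta\sum_{|\sigma|\le N}\langle \mathbf{n}\cdot\nabla\partial^\sigma\b,\partial^\sigma\u\rangle,
\end{align*}
equivalent to $\|\u\|_{H^N}^2+\|\b\|_{H^N}^2$ for small fixed $\delta$. Differentiating the interaction term produces the missing good term $-\delta\|\mathbf{n}\cdot\nabla\u\|_{H^N}^2$, up to a factor $\|\mathbf{n}\cdot\nabla\b\|_{H^N}^2$ and a Laplacian cross term that must be reabsorbed into the magnetic dissipation by substituting from the $\b$-equation; the derivative bookkeeping here is delicate. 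The crucial point is that the resulting dissipation $\mathcal{D}=\|\mathbf{n}\cdot\nabla\u\|_{H^N}^2+\|\nabla\b\|_{H^N}^2$ is anisotropic, so while the magnetic nonlinearities and the cross terms can be put under $\mathcal{E}^{1/2}\mathcal{D}$ via commutator and product laws together with $\div\u=\div\b=0$, the velocity transport term $\langle[\partial^\sigma,\u\cdot\nabla]\u,\partial^\sigma\u\rangle$ is \emph{not} controlled by $\mathcal{D}$ and only yields $C\|\nabla\u\|_{L^\infty}\mathcal{E}$. Hence the top-order bound is not self-contained: it closes only through Grönwall,
\begin{align*}
\mathcal{E}(t)\le \mathcal{E}(0)\exp\Big(C\int_0^t\|\nabla\u\|_{L^\infty}\,d\tau\Big),
\end{align*}
provided $\|\nabla\u\|_{L^\infty}$ is integrable in time, which can only come from the decay estimates. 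The uniform bound and the decay must therefore be closed simultaneously in one time-weighted bootstrap.

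For the decay I would use the Diophantine condition quantitatively: on mean-zero fields \eqref{diufantu} gives $\|\mathbf{n}\cdot\nabla f\|_{H^s}^2\ge c^2\|f\|_{H^{s-r}}^2$, so $\mathcal{D}$ controls $\|\u\|_{H^{N-r}}^2+\|\b\|_{H^N}^2$ with a loss of $r$ derivatives in the velocity. To convert this degenerate dissipation into an algebraic rate I would run a time-weighted ladder, proving inductively that $(1+t)^{2m}\big(\|\u\|_{H^{N-2(r+1)m}}^2+\|\b\|_{H^{N-2(r+1)m}}^2\big)\lesssim\varepsilon^2$ for $m=0,1,\dots,\beta+1$; each rung multiplies the localized energy inequality by a power of $(1+t)$, absorbs the resulting lower-weight term through interpolation between the loss-of-$r$ dissipation and the previous rung, and integrates in time. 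The per-rung cost of $2(r+1)$ derivatives is exactly why $N$ must be large in terms of $\beta$ and $r$: requiring the base level $N-2(r+1)(\beta+1)$ to reach $r+\alpha+3$ is precisely the hypothesis $N\ge(2\beta+3)r+\alpha+2\beta+5$, with the arbitrary buffer $\alpha>0$ absorbing the strict losses in interpolation. This yields the base decay $\|\u\|_{H^{r+\alpha+3}}+\|\b\|_{H^{r+\alpha+3}}\lesssim\varepsilon(1+t)^{-(\beta+1)}$; since $r+\alpha+3>2$, the 2D embedding gives $\|\nabla\u\|_{L^\infty}\lesssim\|\u\|_{H^{r+\alpha+3}}\lesssim\varepsilon(1+t)^{-(\beta+1)}$, which is integrable precisely because $\beta>0$, thereby closing the Grönwall bound of the previous step. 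Finally, interpolating between the uniform bound at level $N$ (rate $0$) and the base decay at level $r+\alpha+3$ (rate $\beta+1$) with $\theta=\frac{N-\gamma}{N-r-\alpha-3}$ produces exactly the stated exponent $\frac{(N-\gamma)(\beta+1)}{N-r-\alpha-3}$.

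The principal obstacle is the interplay just described: because the hidden damping acts only in the direction $\mathbf{n}$ and only after conceding $r$ derivatives, it cannot absorb the velocity transport nonlinearity, so the uniform top-order bound is not closed on its own but must borrow the time-integrability of $\|\nabla\u\|_{L^\infty}$ from the decay — while the decay itself is limited by the same degenerate, derivative-losing dissipation. Making this circular dependence consistent forces the single time-weighted bootstrap with its per-step derivative loss of $2(r+1)$, and it is this loss that dictates both the large regularity threshold for $N$ and the precise way $\alpha$ and $\beta$ enter the hypotheses and the final rate; the requirement $\beta>0$ is not cosmetic but exactly what renders $\|\nabla\u\|_{L^\infty}$ integrable. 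A secondary difficulty is the top-order derivative count in the interaction term, where the Laplacian cross term must be tamed by the $\b$-equation to avoid demanding magnetic control two derivatives beyond the available dissipation.
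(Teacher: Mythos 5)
Your proposal captures the paper's central mechanism: a modified energy containing a cross term of the form $\big\langle D^{s}\b,\,D^{s}(\n\cdot\nabla\u)\big\rangle$ whose time derivative, via the $\b$-equation and the Diophantine condition, manufactures the missing velocity damping at the cost of $r$ derivatives; the observation that the top-order $H^{N}$ bound closes only through Gr\"onwall once $\|\nabla\u\|_{L^\infty}$ is shown to be time-integrable (which is exactly where $\beta>0$ enters); and the final interpolation between the levels $N$ and $r+\alpha+3$. However, one step would fail as written: you install the interaction term at the top level $H^{N}$. Differentiating $\langle\n\cdot\nabla\partial^{\sigma}\b,\partial^{\sigma}\u\rangle$ with $|\sigma|=N$ and substituting the $\b$-equation produces the diffusive contribution $\langle\partial^{\sigma}\Delta\b,\n\cdot\nabla\partial^{\sigma}\u\rangle$, which costs either $\|\nabla\b\|_{H^{N+1}}$ or, after one integration by parts, $\|\n\cdot\nabla\u\|_{H^{N+1}}$ --- in both cases one derivative beyond the available dissipation $\|\nabla\b\|_{H^{N}}^{2}+\|\n\cdot\nabla\u\|_{H^{N}}^{2}$. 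Your suggested repair, ``substituting from the $\b$-equation,'' only reintroduces $\partial_t\b$ and is circular. The paper sidesteps this entirely by placing the cross term at the strictly lower level $s\le r+\alpha+2$ while the accompanying energy sits at $H^{r+\alpha+3}$ (Lemma \ref{ping13}), so that $\|\Delta\b\|_{H^{s}}\le\|\nabla\b\|_{H^{r+\alpha+3}}$ is exactly the dissipation already present; the top level $H^{N}$ is kept as a bare energy estimate and is never asked to produce velocity damping.

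The remaining differences are legitimate alternatives rather than gaps. For the decay you propose a time-weighted induction ladder losing $2(r+1)$ derivatives per rung; the paper instead derives a single Lyapunov inequality $\frac{d}{dt}\mathcal{E}+c\,\mathcal{E}^{\frac{2\beta+3}{2(\beta+1)}}\le 0$ at the fixed level $r+\alpha+3$, by interpolating $\norm{\u}{H^{r+\alpha+3}}\le\norm{\u}{H^{\alpha+2}}^{\frac{2(\beta+1)}{2\beta+3}}\norm{\u}{H^{N}}^{\frac{1}{2\beta+3}}$ and controlling $\norm{\u}{H^{\alpha+2}}$ by the dissipation through Lemma \ref{diu}. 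Both routes yield $\norm{\u}{H^{r+\alpha+3}}+\norm{\b}{H^{r+\alpha+3}}\lesssim(1+t)^{-(\beta+1)}$, and, reassuringly, your derivative count $N-2(r+1)(\beta+1)\ge r+\alpha+3$ reproduces exactly the hypothesis $N\ge(2\beta+3)r+\alpha+2\beta+5$, which in the paper arises instead as the admissibility condition for the interpolation exponents. The closure of the $H^{N}$ bound by Gr\"onwall and the concluding interpolation to intermediate $\gamma$ coincide with the paper's.
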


\begin{remark}
Compared to \cite{zhuyijmp}, we have removed the reflection symmetry assumption on the initial data.
Moreover, we obtain  an explicit large-time decay rate of the solutions.
\end{remark}
%,  but with stronger  background magnetic field  satisfying a Diophantine condition.

\begin{remark}
It should be mentioned that our theorem still be valid for the 2D incompressible viscous non-resistive MHD system. Interested readers can find similar result in \cite{zhangzhifei} for 3D case.
\end{remark}
\begin{remark}
Our methods can be used to other related models. Similar result for the
compressible system will be presented in a forthcoming paper.
\end{remark}

\begin{remark}
Whether the Diophantine condition \eqref{diufantu} can be removed is a  challenged open problem. This is left in the future work.
\end{remark}

\begin{remark}
It seems difficult to generalize our result to the case of the whole space $\R^2$,
since the Diophantine condition \eqref{diufantu} plays a crucial role in our proof.
\end{remark}

\section{The proof of the theorem}
The  proof of the Theorem \ref{dingli} relies heavily on the following two lemmas.
\begin{lemma}\label{diu}
Let ${\n}\in\R^2$ satisfy the Diophantine condition \eqref{diufantu}.

\begin{itemize}
  \item For any $s\in R$, there holds
  \begin{equation}\label{poin1}
\|f\|_{H^{s}}\le C\|{\n}\cdot\nabla f\|_{H^{s+r}},\ \ if\ \int_{\T^2}f\,dx=0.
\end{equation}
  \item For any $s>0$, one can remove the zero-mean condition by using homogeneous norms. Precisely, if $s>0$, there holds, for any $f$, that
\begin{equation}\label{poin2}
\|f\|_{\dot{H}^{s}}\le C\|{\n}\cdot\nabla f\|_{H^{s+r}}.
\end{equation}
\end{itemize}
\end{lemma}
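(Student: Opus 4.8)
The plan is to prove both inequalities using the Fourier series expansion on the torus, since the Diophantine condition is fundamentally a statement about the lower bound of $|\n\cdot\mathbf{k}|$ in frequency space. Writing $f(x)=\sum_{\mathbf{k}\in\Z^2}\hat{f}(\mathbf{k})e^{i\mathbf{k}\cdot x}$, the operator $\n\cdot\nabla$ acts as multiplication by $i(\n\cdot\mathbf{k})$ on the Fourier side, so $\widehat{\n\cdot\nabla f}(\mathbf{k})=i(\n\cdot\mathbf{k})\hat{f}(\mathbf{k})$. The key inversion is the pointwise-in-frequency estimate $|\hat{f}(\mathbf{k})|=\frac{1}{|\n\cdot\mathbf{k}|}|\widehat{\n\cdot\nabla f}(\mathbf{k})|\le \frac{|\mathbf{k}|^r}{c}|\widehat{\n\cdot\nabla f}(\mathbf{k})|$, which is precisely where the Diophantine condition \eqref{diufantu} enters. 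This trades one factor of $|\mathbf{k}|^r$ for recovering $f$ from $\n\cdot\nabla f$, which explains the loss of $r$ derivatives in the statement.

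For the first inequality \eqref{poin1}, I would start from the definition $\|f\|_{H^s}^2=\sum_{\mathbf{k}}(1+|\mathbf{k}|^2)^s|\hat{f}(\mathbf{k})|^2$. The zero-mean hypothesis $\int_{\T^2}f\,dx=0$ forces $\hat{f}(0)=0$, so the sum ranges only over $\mathbf{k}\in\Z^2\setminus\{0\}$, which is exactly the set where \eqref{diufantu} is valid. Substituting the inversion bound gives
\begin{align*}
\|f\|_{H^s}^2
&=\sum_{\mathbf{k}\neq 0}(1+|\mathbf{k}|^2)^s|\hat{f}(\mathbf{k})|^2
\le \frac{1}{c^2}\sum_{\mathbf{k}\neq 0}(1+|\mathbf{k}|^2)^s|\mathbf{k}|^{2r}|\widehat{\n\cdot\nabla f}(\mathbf{k})|^2.
\end{align*}
Since $|\mathbf{k}|^{2r}\le (1+|\mathbf{k}|^2)^r$, the weight is dominated by $(1+|\mathbf{k}|^2)^{s+r}$, and the right side is bounded by $C^2\|\n\cdot\nabla f\|_{H^{s+r}}^2$, giving \eqref{poin1}. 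This direction is essentially a bookkeeping exercise once the frequency-by-frequency estimate is in hand.

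For the second inequality \eqref{poin2}, the point is that we no longer assume zero mean, so $\hat{f}(0)$ may be nonzero; however, using the homogeneous norm $\|f\|_{\dot{H}^s}^2=\sum_{\mathbf{k}\neq 0}|\mathbf{k}|^{2s}|\hat{f}(\mathbf{k})|^2$ automatically discards the $\mathbf{k}=0$ mode because the factor $|\mathbf{k}|^{2s}$ vanishes there (here $s>0$ is used so the homogeneous weight is harmless and the zero frequency contributes nothing). The same inversion bound then yields $\|f\|_{\dot{H}^s}^2\le \frac{1}{c^2}\sum_{\mathbf{k}\neq 0}|\mathbf{k}|^{2s+2r}|\widehat{\n\cdot\nabla f}(\mathbf{k})|^2\le C^2\|\n\cdot\nabla f\|_{H^{s+r}}^2$, after again absorbing $|\mathbf{k}|^{2s+2r}$ into $(1+|\mathbf{k}|^2)^{s+r}$.

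The main obstacle, if any, is conceptual rather than computational: one must be careful that the Diophantine lower bound \eqref{diufantu} is only available away from the origin, so the arguments genuinely rely on eliminating the zero frequency—either via the explicit zero-mean hypothesis in \eqref{poin1} or automatically through the homogeneous-norm weight in \eqref{poin2}. Everything else is a direct Plancherel computation with the single frequency-localized inversion estimate as the engine.
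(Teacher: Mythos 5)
Your proof is correct and is essentially the same as the paper's: both arguments are a Plancherel computation in which the Diophantine condition supplies the frequency-by-frequency lower bound $|\n\cdot\mathbf{k}|\ge c|\mathbf{k}|^{-r}$ for $\mathbf{k}\neq 0$, the factor $|\mathbf{k}|^{2r}$ is absorbed into $(1+|\mathbf{k}|^2)^{r}$, and the zero frequency is eliminated either by the mean-zero hypothesis or by the homogeneous weight. The only cosmetic difference is that the paper bounds $\|\n\cdot\nabla f\|_{H^{s+r}}^2$ from below while you bound $\|f\|_{H^s}^2$ from above; these are the same inequality read in opposite directions.
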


\begin{proof}
We give the proof for completeness. By Plancherel formula, we have
\begin{align*}
\|{\n}\cdot\nabla f\|_{H^{s+r}}^2
=&\sum_{\mathbf{k}\in\Z^2}(1+|\mathbf{k}|^2)^{s+r}|\n\cdot\mathbf{k}|^2|\hat{f}|^2\nn\\
=&\sum_{\mathbf{k}\in\Z^2\setminus \{0\}}(1+|\mathbf{k}|^2)^{s+r}|\n\cdot\mathbf{k}|^2|\hat{f}|^2\nn\\
\ge&c\sum_{\mathbf{k}\in\Z^2\setminus \{0\}}(1+|\mathbf{k}|^2)^{s+r}|\mathbf{k}|^{-2r}|\hat{f}|^2\nn\\
\ge&c\sum_{\mathbf{k}\in\Z^2\setminus \{0\}}(1+|\mathbf{k}|^2)^{s}|\hat{f}|^2.
\end{align*}
So if $\int_{\T^2} f=0$, we have \eqref{poin1} since $\hat{f}(0)=0$. If $s>0$, we have \eqref{poin2}.
\end{proof}

\begin{lemma}\label{daishu}{\rm(\cite{kato})}
Let $s\ge 0$. Then there exists a constant $C$ such that, for any $f,g\in {H^{s}}(\T^2)\cap {L^\infty}(\T^2)$, we have
\begin{equation*}
\|fg\|_{H^{s}}\le C(\|f\|_{L^\infty}\|g\|_{H^{s}}+\|g\|_{L^\infty}\|f\|_{H^{s}}).
\end{equation*}
\end{lemma}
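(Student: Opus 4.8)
The plan is to prove this Moser-type product estimate by a Littlewood--Paley / Bony paraproduct decomposition, treating $s=0$ separately. For $s=0$ the claim is immediate from H\"older's inequality, since $\|fg\|_{L^2}\le\|f\|_{L^\infty}\|g\|_{L^2}$, which is already dominated by the right-hand side; so I may assume $s>0$ throughout. Writing $\{\Delta_j\}_{j\ge-1}$ for the standard \emph{inhomogeneous} dyadic Littlewood--Paley blocks on $\T^2$ (not to be confused with the Laplacian $\Delta$) and $S_j=\sum_{j'<j}\Delta_{j'}$ for the low-frequency cutoffs, I would split
\[
fg = T_f g + T_g f + R(f,g),
\]
with
\[
T_f g=\sum_{j}S_{j-1}f\,\Delta_j g,\qquad R(f,g)=\sum_{|j-j'|\le1}\Delta_j f\,\Delta_{j'}g,
\]
and estimate the three pieces so that each is controlled by one of the two terms appearing on the right-hand side.

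For the two paraproducts I would exploit spectral localization: each summand $S_{j-1}f\,\Delta_j g$ has Fourier support in a fixed annulus of size $2^j$, so only finitely many blocks $\Delta_q$ act nontrivially on it. This gives, using $\|S_{j-1}f\|_{L^\infty}\le C\|f\|_{L^\infty}$,
\begin{align*}
\|T_fg\|_{H^s}^2 &\lesssim\sum_q 2^{2qs}\Bigl(\sum_{|j-q|\le N_0}\|S_{j-1}f\|_{L^\infty}\|\Delta_j g\|_{L^2}\Bigr)^2\\
&\lesssim\|f\|_{L^\infty}^2\sum_j 2^{2js}\|\Delta_j g\|_{L^2}^2\lesssim\|f\|_{L^\infty}^2\|g\|_{H^s}^2.
\end{align*}
By the symmetric argument $\|T_g f\|_{H^s}\lesssim\|g\|_{L^\infty}\|f\|_{H^s}$, so both paraproducts are already of the desired form.

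The remainder term is where the hypothesis $s>0$ genuinely enters, and it is the step I expect to require the most care. Here each summand $\Delta_j f\,\Delta_{j'}g$ with $|j-j'|\le1$ is supported only in a \emph{ball} (rather than an annulus) of radius $\sim 2^j$, so $\Delta_q R$ collects all scales $j\ge q-N_0$. Placing one factor in $L^\infty$ and the other in $L^2$ yields
\[
2^{qs}\|\Delta_q R\|_{L^2}\lesssim\|f\|_{L^\infty}\sum_{j\ge q-N_0}2^{(q-j)s}\,\bigl(2^{js}\|\Delta_j g\|_{L^2}\bigr).
\]
Because $s>0$, the kernel $2^{(q-j)s}\mathbf 1_{\{j\ge q-N_0\}}$ is summable in $\ell^1$, so discrete Young's inequality ($\ell^1\ast\ell^2\hookrightarrow\ell^2$) applied to the $\ell^2$ sequence $\bigl(2^{js}\|\Delta_j g\|_{L^2}\bigr)_j$ gives $\|R(f,g)\|_{H^s}\lesssim\|f\|_{L^\infty}\|g\|_{H^s}$ (by symmetry one also obtains the $\|g\|_{L^\infty}\|f\|_{H^s}$ version). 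Summing the three contributions produces the stated inequality. The only real obstacle is precisely this summability in the remainder: when $s=0$ the geometric weight $2^{(q-j)s}$ degenerates to $1$ and the convolution estimate breaks down, which is exactly the reason that case must be disposed of beforehand by the elementary H\"older bound.
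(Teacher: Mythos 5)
Your proof is correct, but note that the paper itself offers no proof of this lemma at all: it is quoted directly from Kato's lecture notes \cite{kato}, so there is nothing in the source to compare your argument against line by line. Your Bony-decomposition route is the standard modern way to establish this Moser-type tame estimate for arbitrary real $s\ge 0$: the two paraproduct bounds $\|T_fg\|_{H^s}\lesssim\|f\|_{L^\infty}\|g\|_{H^s}$ hold for every real $s$, and you correctly isolate the remainder as the only place where $s>0$ is needed, since the ball (rather than annulus) localization of $\Delta_jf\,\tilde\Delta_jg$ forces a sum over $j\ge q-N_0$ whose geometric weight $2^{(q-j)s}$ is $\ell^1$ precisely when $s>0$; dispatching $s=0$ by H\"older beforehand is the right fix. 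The classical proofs (Moser, Kato) instead treat integer $s$ by the Leibniz rule combined with Gagliardo--Nirenberg interpolation $\|D^kf\|_{L^{2s/k}}\lesssim\|f\|_{L^\infty}^{1-k/s}\|f\|_{H^s}^{k/s}$, which is more elementary but does not directly cover fractional $s$; your approach buys the full range $s\ge 0$ in one argument, which is what the lemma as stated requires (the paper applies it with $s=r+\alpha+2$, generally non-integer). The only cosmetic caveat is that on $\T^2$ one should fix conventions for the $j=-1$ block so that the annulus/ball support claims are literally true, but this is routine and does not affect the argument.
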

Now, we begin to prove the main theorem.
\subsection{$L^2$ energy estimate}
Firstly, denote $\langle a,b\rangle$ the $L^2(\T^2)$ inner product of $a$ and $b$.
A standard energy estimate gives
\begin{align}\label{ping1}
&\frac12\frac{d}{dt}(\norm{\u}{L^2}^2+\norm{\b}{L^2}^2)+\norm{\nabla\b}{L^2}^2=0
\end{align}
where  we used the  following cancellations
\begin{align*}%\label{ping2}
&\big\langle \u\cdot\nabla \u,\u\big\rangle
   =\big\langle \u\cdot\nabla \b,\b\big\rangle=0,
  \quad \big\langle\nabla p,\u\big\rangle=0
,\nn\\
&\big\langle \b\cdot\nabla \b,\u\big\rangle
       +\big\langle \b\cdot\nabla \u,\b\big\rangle=0,\quad\big\langle{\n}\cdot\nabla \b, \u\big\rangle
   +\big\langle{\n}\cdot\nabla \u, \b \big\rangle=0.
\end{align*}

\subsection{High order energy estimate}
  Denote $D \stackrel{\mathrm{def}}{=}\sqrt{-\Delta}$.
  We operate
$D^\alpha$
on the first two equations respectively
and take the scalar product of them
with $D^\alpha \u$ and $D^\alpha \b$ respectively,
 add them together and then sum the result over $|\alpha|\leq m$.
  We obtain
\begin{align}\label{3.8}
&\frac{1}{2}\frac{d}{dt}\big(\left\|\u\right\|_{H^m}^2+\left\|\b\right\|_{H^m}^2\big)
        +\left\|\nabla\b\right\|_{H^m}^2\nn\\
&\quad=-\sum_{0<|\alpha|\leq m}\left\langle[D^{\alpha},\u\cdot\nabla ]\u,D^{\alpha}\u\right\rangle+\sum_{0<|\alpha|\leq m}\left\langle[D^{\alpha},\b\cdot\nabla ]\b,D^{\alpha}\u\right\rangle\nn\\
&\qquad-\sum_{0<|\alpha|\leq m}\left\langle[D^{\alpha},\u\cdot\nabla ]\b,D^{\alpha}\b\right\rangle+\sum_{0<|\alpha|\leq m}\left\langle[D^{\alpha},\b\cdot\nabla ]\u,D^{\alpha}\b\right\rangle
\end{align}
where we used the estimate \eqref{ping1} and the following cancellations
\begin{align*}%\label{ping5}
&\left\langle \u\cdot\nabla D^{\alpha} \u,D^{\alpha}\u\right\rangle=\left\langle  D^{\alpha} \nabla p,D^{\alpha}\u\right\rangle=\left\langle \u\cdot\nabla D^{\alpha} \b,D^{\alpha}\b\right\rangle=0,
\nn\\
&\left\langle \b\cdot\nabla D^{\alpha} \b,D^{\alpha}\u\right\rangle+\left\langle \b\cdot\nabla D^{\alpha} \u,D^{\alpha}\b\right\rangle=0,\nn\\
&\left\langle D^{\alpha}({\n}\cdot\nabla \b),D^{\alpha}\u\right\rangle+\left\langle D^{\alpha}({\n}\cdot\nabla \u),D^{\alpha}\b\right\rangle=0.
\end{align*}
In view of the well-known calculus inequality,
\begin{align*}%\label{3.9}
 \sum_{|\alpha|\leq m}\left\|D^{\alpha}(fg)-(D^{\alpha}f)g\right\|_{L^2}
 \leq C\big(\|f\|_{H^{m-1}}\|\nabla g\|_{L^{\infty}}
        +\|f\|_{L^{\infty}}\|g\|_{H^{m}}\big),
\end{align*}
we have
\begin{align*}%\label{ping7}
\norm{[D^{\alpha},\u\cdot\nabla ]\u}{L^2}+\norm{[D^{\alpha},\b\cdot\nabla ]\b}{L^2}\le C (\norm{\nabla \u}{L^\infty}\norm{D^{\alpha} \u}{L^2}+ \norm{\nabla \b}{L^\infty}\norm{D^{\alpha} \b}{L^2}),
\end{align*}
\begin{align*}%\label{ping9}
\norm{[D^{\alpha},\u\cdot\nabla ]\b}{L^2}+\norm{[D^{\alpha},\b\cdot\nabla ]\u}{L^2}\le C(\norm{\nabla \u}{L^\infty}\norm{D^{\alpha}\b}{L^2}+\norm{D^{\alpha} \u}{L^2}\norm{\nabla \b}{L^\infty}).
\end{align*}
Inserting the above estimates into \eqref{3.8}, we obtain
\begin{align}\label{3.16}
&\frac12\frac{d}{dt}\big(\|\u\|_{H^{m}}^2+\|\b\|_{H^{m}}^2\big)
 +\|\nabla\b\|_{H^{m}}^2
       \nonumber\\
 &\quad\leq C\big(\|\nabla\u\|_{L^{\infty}}+\|\nabla\b\|_{L^{\infty}}\big)
  \big(\|\u\|_{H^{m}}^2+\|\b\|_{H^{m}}^2\big).
\end{align}

\subsection{A key Lemma}
The following lemma which relies heavily on the structural characteristics of the system \eqref{m} is crucial to get the time decay of the velocity field.
\begin{lemma}\label{ping13}
For any ${N}\ge r+\alpha+4$ with $r>1$ and $\alpha>0$. Assume that
\begin{align}\label{ping14}
\sup_{t\in[0,T]}(\norm{\u}{H^{N}}+\norm{\b}{H^{N}})\le \delta,
\end{align}
for some $0<\delta<1.$ Then there holds that
\begin{align}\label{ping15}
&\norm{{\n}\cdot\nabla \u}{H^{r+\alpha+2}}^2-\sum_{0\le s\le r+\alpha+2}\frac{d}{dt}\big\langle{D^s}\b,{D^s}({\n}\cdot\nabla \u)\big\rangle\le C\norm{ \nabla\b}{H^{r+\alpha+3}}^2.
\end{align}

\end{lemma}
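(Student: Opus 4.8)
The plan is to produce the dissipation $\|\n\cdot\nabla\u\|_{H^{r+\alpha+2}}^2$ out of a time derivative of the cross quantity $\sum_s\langle D^s\b, D^s(\n\cdot\nabla\u)\rangle$, modulo terms controlled by the magnetic dissipation. First I would differentiate each summand in time and substitute both equations of \eqref{m}, writing $\frac{d}{dt}\langle D^s\b, D^s(\n\cdot\nabla\u)\rangle = \langle D^s\partial_t\b, D^s(\n\cdot\nabla\u)\rangle + \langle D^s\b, D^s(\n\cdot\nabla\partial_t\u)\rangle$ and replacing $\partial_t\b$ from the second equation and $\partial_t\u$ from the first. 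The crucial observation is that the forcing term $\n\cdot\nabla\u$ sitting in the $\b$-equation contributes, through $\langle D^s\partial_t\b, D^s(\n\cdot\nabla\u)\rangle$, exactly the positive term $\|D^s(\n\cdot\nabla\u)\|_{L^2}^2$; summing over $0\le s\le r+\alpha+2$ yields the left side of \eqref{ping15}, and the remaining contributions must all be dominated by $C\|\nabla\b\|_{H^{r+\alpha+3}}^2$ after absorption.

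Next I would dispose of the linear and pressure contributions. The pressure enters $\n\cdot\nabla\partial_t\u$ as $\nabla(\n\cdot\nabla p)$ and pairs to zero against $D^s\b$ because $\div\b=0$. The term $-\langle D^s(\n\cdot\nabla(\n\cdot\nabla\b)), D^s\b\rangle$ integrates by parts (constant $\n$) into $+\|D^s(\n\cdot\nabla\b)\|_{L^2}^2$, bounded by $C\|\nabla\b\|_{H^{r+\alpha+2}}^2\le C\|\nabla\b\|_{H^{r+\alpha+3}}^2$. The diffusion term $-\langle D^s(\n\cdot\nabla\u), D^s\Delta\b\rangle$ is estimated by $\|\n\cdot\nabla\u\|_{H^{r+\alpha+2}}\|\Delta\b\|_{H^{r+\alpha+2}}$, and since $\|\Delta\b\|_{H^{r+\alpha+2}}\le\|\nabla\b\|_{H^{r+\alpha+3}}$, Young's inequality turns it into $\tfrac18\|\n\cdot\nabla\u\|_{H^{r+\alpha+2}}^2+C\|\nabla\b\|_{H^{r+\alpha+3}}^2$, the first piece being absorbed into the left side. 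This two-derivative cost on $\b$ is exactly what forces the index $N\ge r+\alpha+4$.

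For the nonlinear terms I would use the product law of Lemma \ref{daishu} together with the a priori smallness \eqref{ping14}. The genuinely magnetic terms ($\b\cdot\nabla\u$, $\u\cdot\nabla\b$, $\b\cdot\nabla\b$) always carry a factor of $\b$; estimating $\|\b\|_{L^\infty}$ and $\|\nabla\b\|_{L^\infty}$ by $\|\nabla\b\|_{H^{r+\alpha+3}}$ through Sobolev embedding and the (conserved) zero-mean Poincaré inequality, and the $H^{r+\alpha+2}$ factors by $\delta$, each such term is bounded by $C\delta\big(\|\n\cdot\nabla\u\|_{H^{r+\alpha+2}}^2+\|\nabla\b\|_{H^{r+\alpha+3}}^2\big)$ and absorbed. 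The main obstacle is the purely hydrodynamic term $\langle D^s\b, D^s(\n\cdot\nabla(\u\cdot\nabla\u))\rangle$, which after integration by parts equals $-\langle D^s(\u\cdot\nabla\u), D^s(\n\cdot\nabla\b)\rangle$ and carries no obvious magnetic smallness. Here the Diophantine inequality \eqref{poin2} of Lemma \ref{diu} is essential: using the conserved zero-mean of $\u$, I bound the low-order norms $\|\u\|_{L^\infty}$ and $\|\nabla\u\|_{L^\infty}$ by $C\|\n\cdot\nabla\u\|_{H^{r+\alpha+2}}$, which is possible precisely because $\alpha>0$ supplies the room $2+r<r+\alpha+2$ needed to place $\nabla\u$ in $L^\infty$. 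Consequently $\|\u\cdot\nabla\u\|_{H^{r+\alpha+2}}\le C\delta\|\n\cdot\nabla\u\|_{H^{r+\alpha+2}}$, so the term is $\le\tfrac18\|\n\cdot\nabla\u\|_{H^{r+\alpha+2}}^2+C\delta^2\|\nabla\b\|_{H^{r+\alpha+3}}^2$. Collecting every contribution and choosing $\delta$ small to absorb all $\|\n\cdot\nabla\u\|_{H^{r+\alpha+2}}^2$ pieces into the left side yields \eqref{ping15}. I expect this control of the advection term $\u\cdot\nabla\u$ by the Diophantine-regularized dissipation to be the delicate point, since it is the only place where the nonlinearity does not supply its own magnetic smallness.
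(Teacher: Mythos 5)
Your proposal is correct and follows essentially the same route as the paper: you extract $\|D^s(\n\cdot\nabla\u)\|_{L^2}^2$ from the $\b$-equation, trade the $\partial_t\b$ pairing for the time derivative of the cross term $\langle D^s\b, D^s(\n\cdot\nabla\u)\rangle$ plus terms obtained by substituting the $\u$-equation, kill the pressure via $\div\b=0$, and control the purely hydrodynamic term $\u\cdot\nabla\u$ by bounding low-order norms of $\u$ through the Diophantine inequality $\|\u\|_{H^{2+\alpha}}\le C\|\n\cdot\nabla\u\|_{H^{r+\alpha+2}}$ — exactly the paper's treatment, including the identification of that term as the delicate one. The only cosmetic difference is that you start from $\frac{d}{dt}$ of the cross term rather than testing the equation and then integrating by parts in time, which produces the identical collection of terms.
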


\begin{proof}
Applying ${D^s} (0\le s\le r+\alpha+2) $ to the second equation of \eqref{m}, and multiplying it by ${D^s}({\n}\cdot\nabla \u)$
then integrating over $\T^2$, we obtain
\begin{align}\label{ping16}
\norm{{D^s}({\n}\cdot\nabla \u)}{L^2}^2
=&{\big\langle{D^s}\partial_t \b,{D^s}({\n}\cdot\nabla \u)\big\rangle}-{\big\langle{D^s} \Delta\b,{D^s}({\n}\cdot\nabla \u)\big\rangle}\nn\\
&+{\big\langle{D^s} (\u\cdot\nabla \b),{D^s}({\n}\cdot\nabla \u)\big\rangle}-{\big\langle{D^s}(\b\cdot\nabla \u),{D^s}({\n}\cdot\nabla \u)\big\rangle}\nn\\
 \stackrel{\mathrm{def}}{=}&I_1+I_2+I_3+I_4.
\end{align}
Thanks to the H\"older inequality, Young's inequality, and the embedding relation,
 we have
\begin{align}\label{ping16+1}
%\big\langle{D^s} \Delta\b,{D^s}({\n}\cdot\nabla \u)\big\rangle
I_2\le& C\norm{{D^s}\Delta\b}{L^2}\norm{{D^s}({\n}\cdot\nabla \u)}{L^2}\nn\\
\le&\frac{1}{16}\norm{{D^s}({\n}\cdot\nabla \u)}{L^2}^2+C\norm{\Delta \b}{H^s}^2\nn\\
\le&\frac{1}{16}\norm{{D^s}({\n}\cdot\nabla \u)}{L^2}^2+C\norm{\nabla\b}{H^{s+1}}^2\nn\\
\le&\frac{1}{16}\norm{{D^s}({\n}\cdot\nabla \u)}{L^2}^2+C\norm{\nabla\b}{H^{r+\alpha+3}}^2.
\end{align}
Similarly,  using Lemmas \ref{diu} and \ref{daishu}, we obtain
\begin{align}\label{ping17}
%\big\langle{D^s}(\u\cdot\nabla \b),{D^s}({\n}\cdot\nabla \u)\big\rangle
I_3\le& C\norm{{D^s}(\u\cdot\nabla \b)}{L^2}\norm{{D^s}({\n}\cdot\nabla \u)}{L^2}\nn\\
\le& C(\norm{\u}{L^\infty}\norm{\nabla \b}{H^s}+\norm{\nabla \b}{L^\infty}\norm{ \u}{H^s} )\norm{{D^s}({\n}\cdot\nabla \u)}{L^2}\nn\\
\le&\frac{1}{16}\norm{{D^s}({\n}\cdot\nabla \u)}{L^2}^2+C(\norm{\u}{H^{2+\alpha}}^2\norm{\nabla \b}{H^s}^2+\norm{\nabla \b}{H^{2+\alpha}}^2\norm{ \u}{H^s}^2 )\nn\\
\le&\frac{1}{16}\norm{{D^s}({\n}\cdot\nabla \u)}{L^2}^2+C\norm{\n\cdot\nabla\u}{H^{r+\alpha+2}}^2\norm{ \nabla\b}{H^{s}}^2+C\norm{ \nabla\b}{H^{2+\alpha}}^2\norm{ \u}{H^N}^2 \nn\\
\le&\frac{1}{16}\norm{{D^s}({\n}\cdot\nabla \u)}{L^2}^2+C\delta^2\norm{ \n\cdot\nabla\u}{H^{r+\alpha+2}}^2+C\delta^2\norm{ \nabla\b}{H^{r+\alpha+3}}^2,
\end{align}
and
\begin{align}\label{ping19-1}
%\big\langle{D^s}(\b\cdot\nabla \u),{D^s}({\n}\cdot\nabla \u)\big\rangle
I_4\le& C\norm{{D^s}(\b\cdot\nabla \u)}{L^2}\norm{{D^s}({\n}\cdot\nabla \u)}{L^2}\nn\\
\le& C(\norm{\b}{L^\infty}\norm{\nabla \u}{H^s}+\norm{\nabla \u}{L^\infty}\norm{ \b}{H^s} )\norm{{D^s}({\n}\cdot\nabla \u)}{L^2}\nn\\
\le& C(\norm{\b}{H^2}\norm{ \u}{H^{s+1}}+\norm{\nabla \u}{H^2}\norm{ \b}{H^s} )\norm{{D^s}({\n}\cdot\nabla \u)}{L^2}\nn\\
\le&\frac{1}{16}\norm{{D^s}({\n}\cdot\nabla \u)}{L^2}^2+C\norm{\u}{H^N}^2(\norm{ \b}{H^{s}}^2+\norm{ \b}{H^{2}}^2)\nn\\
\le&\frac{1}{16}\norm{{D^s}({\n}\cdot\nabla \u)}{L^2}^2+C\delta^2\norm{ \b}{H^{r+\alpha+2}}^2.
\end{align}
Due to $\int_{\T^2}\b_0\,dx=0,$  there holds
\begin{align}\label{pindg}
\norm{ \b}{H^{r+\alpha+2}}^2\le C\norm{ \nabla\b}{H^{r+\alpha+3}}^2
\end{align}
from which we can further get that
\begin{align}\label{ping19}
%\big\langle{D^s}(\b\cdot\nabla \u),{D^s}({\n}\cdot\nabla \u)\big\rangle
I_4
\le&\frac{1}{16}\norm{{D^s}({\n}\cdot\nabla \u)}{L^2}^2+C\delta^2\norm{ \nabla\b}{H^{r+\alpha+3}}^2.
\end{align}

Finally, we have to bound $I_1$. In fact, exploiting the first equation in \eqref{m}, we can rewrite this term into
\begin{align}\label{ping21}
%\big\langle{D^s}\partial_t \b,{D^s}({\n}\cdot\nabla \u)\big\rangle
I_1=&\frac{d}{dt}\big\langle{D^s}\b,{D^s}({\n}\cdot\nabla \u)\big\rangle-\big\langle{D^s}\b,{D^s}({\n}\cdot\nabla \partial_t\u)\big\rangle\nn\\
=&\frac{d}{dt}\big\langle{D^s}\b,{D^s}({\n}\cdot\nabla \u)\big\rangle+\big\langle{D^s}({\n}\cdot\nabla\b),{D^s} \partial_t\u\big\rangle\nn\\
=&\frac{d}{dt}\big\langle{D^s}\b,{D^s}({\n}\cdot\nabla \u)\big\rangle+\big\langle{D^s}({\n}\cdot\nabla \b),{D^s}({\n}\cdot\nabla \b)\big\rangle\\
&+\big\langle{D^s}({\n}\cdot\nabla \b),{D^s}(\b\cdot\nabla \b)\big\rangle-\big\langle{D^s}({\n}\cdot\nabla \b),{D^s}(\u\cdot\nabla \u)\big\rangle\nn
\end{align}
where we  used the cancellation
\begin{align*}%\label{pingping1}
\big\langle{D^s}({\n}\cdot\nabla\b),{D^s} \nabla p\big\rangle=0.
\end{align*}
It follows from the H\"older inequality directly that
\begin{align}\label{pingping2-11}
\big\langle{D^s}({\n}\cdot\nabla \b),{D^s}({\n}\cdot\nabla \b)\big\rangle
\le& C\norm{{D^s}({\n}\cdot\nabla \b)}{L^2}\norm{{D^s}({\n}\cdot\nabla \b)}{L^2}\nn\\
\le& C\norm{\nabla\b}{H^{s}}^2\nn\\
\le& C\norm{\nabla\b}{H^{r+\alpha+3}}^2.
\end{align}
Using Lemma \ref{daishu}, the third term on the right hand side of \eqref{ping21} can be bounded as
\begin{align}\label{pingping2}
\big\langle{D^s}({\n}\cdot\nabla \b),{D^s}(\b\cdot\nabla \b)\big\rangle
\le& C\norm{{D^s}({\n}\cdot\nabla \b)}{L^2}(\norm{\b}{L^\infty}\norm{\nabla \b}{H^s}+\norm{\nabla \b}{L^\infty}\norm{ \b}{H^s} )\nn\\
\le& C\norm{\b}{H^N}\norm{\nabla \b}{H^s}^2\nn\\
\le& C\delta\norm{\nabla\b}{H^{r+\alpha+3}}^2.
\end{align}
In the same manner, we can deal with the last term  on the right hand side of \eqref{ping21}
\begin{align}\label{pingping3}
\big\langle{D^s}({\n}\cdot\nabla \b),{D^s}(\u\cdot\nabla \u)\big\rangle
\le& C\norm{{D^s}({\n}\cdot\nabla \b)}{L^2}(\norm{\u}{L^\infty}\norm{\nabla \u}{H^s}+\norm{\nabla \u}{L^\infty}\norm{ \u}{H^s} )\nn\\
\le& C\norm{{D^s}({\n}\cdot\nabla \b)}{L^2}(\norm{\u}{H^{1+\alpha}}\norm{\nabla \u}{H^s}+\norm{\nabla \u}{H^{1+\alpha}}\norm{ \u}{H^s} )\nn\\
\le& C\norm{\nabla \b}{H^s}\norm{\u}{H^N}\norm{\u}{H^{{2+\alpha}}}\nn\\
\le& C\norm{\nabla \b}{H^s}^2+C\delta^2\norm{\u}{H^{2+\alpha}}^2\nn\\
\le& C\norm{\nabla\b}{H^{r+\alpha+3}}^2+C\delta^2\norm{\n\cdot\nabla\u}{H^{r+\alpha+2}}^2.
\end{align}
Inserting \eqref{pingping2-11}--\eqref{pingping3} into \eqref{ping21} gives
\begin{align}\label{ping22}
\big\langle{D^s}\partial_t \b,{D^s}({\n}\cdot\nabla \u)\big\rangle
\le&\frac{d}{dt}\big\langle{D^s}\b,{D^s}({\n}\cdot\nabla \u)\big\rangle+C\norm{\nabla\b}{H^{r+\alpha+3}}^2+C\delta^2\norm{\n\cdot\nabla \u}{H^{r+\alpha+2}}^2.
\end{align}

Hence, if $\delta $ is small enough, plugging  \eqref{ping16+1}--\eqref{ping19} and \eqref{ping22} into \eqref{ping16},
we can arrive at \eqref{ping15}. This proves the lemma.
\end{proof}

\subsection{Complete the proof of the main theorem}
For any $(\u_0, \b_0)\in H^{N}(\T^2)$, the local well-posedness of \eqref{m1} can be  proved  by using the standard energy method, see also \cite{yeweikui} for a similar result. Thus, we may assume that there exist $T > 0$ and a unique solution
$(\u,\b)\in C([0,T];H^{N})$ of the system \eqref{m}. Furthermore, we may assume that
\begin{align}\label{ping23}
\sup_{t\in[0,T]}(\norm{\u}{H^{N}}+\norm{\b}{H^{N}})\le \delta,
\end{align}
for some $0<\delta<1$
 to be determined later.

Taking $m=r+\alpha+3$ in \eqref{3.16} gives
 \begin{align}\label{gan2}
&\frac12\frac{d}{dt}\big(\|\u\|_{H^{r+\alpha+3}}^2+\|\b\|_{H^{r+\alpha+3}}^2\big)
 +\|\nabla\b\|_{H^{r+\alpha+3}}^2\nn\\
 &\quad\leq C(\|\nabla\u\|_{L^{\infty}}+\|\nabla\b\|_{L^{\infty}}) \big(\|\u\|_{H^{r+\alpha+3}}^2+\|\b\|_{H^{r+\alpha+3}}^2\big).
\end{align}

Hence, let
 $A\ge 1+2C$ be a constant determined later, we infer from Lemma \ref{ping13} and  \eqref{gan2} that
\begin{align}\label{ping24}
&\frac{d}{dt}\left\{A(\norm{\u}{H^{r+\alpha+3}}^2+\norm{\b}{H^{r+\alpha+3}}^2)-\sum_{0\le s\le r+\alpha+2}\big\langle{D^s}\b,{D^s}({\n}\cdot\nabla \u)\big\rangle\right\}\nn\\
&\quad+A\norm{\nabla\b}{H^{r+\alpha+3}}^2+\norm{{\n}\cdot\nabla \u}{H^{r+\alpha+2}}^2\nn\\
&\le CA(\|\nabla\u\|_{L^{\infty}}+\|\nabla\b\|_{L^{\infty}}) \big(\|\u\|_{H^{r+\alpha+3}}^2+\|\b\|_{H^{r+\alpha+3}}^2\big),
\end{align}
provided that $\delta$  is small enough.

Thanks to the embedding relation $H^{\alpha+1}(\T^2)\hookrightarrow L^\infty(\T^2)$ and Lemma \ref{diu}, there holds
\begin{align}\label{gan3}
A\norm{\nabla \u}{L^\infty}\norm{\u}{H^{r+\alpha+3}}^2
&\le CA\norm{ \u}{H^{\alpha+2}}\norm{\u}{H^{r+\alpha+3}}^2\nn\\
&\le CA\norm{ \n\cdot\nabla\u}{H^{r+\alpha+2}}\norm{\u}{H^{r+\alpha+3}}^2\nn\\
&\le  \frac {1}{2}\norm{ \n\cdot\nabla\u}{H^{r+\alpha+2}}^2+CA^2\norm{\u}{H^{r+\alpha+3}}^4.
\end{align}
Similarly, using \eqref{pindg}, we get
\begin{align}\label{ping27}
\norm{\nabla \u}{L^\infty}\norm{\b}{H^{r+\alpha+3}}^2
\le& C\norm{ \u}{H^{\alpha+2}}\norm{\b}{H^{r+\alpha+3}}^2
\le C\delta\norm{\nabla\b}{H^{r+\alpha+3}}^2,\nn\\
\norm{\nabla \b}{L^\infty}\norm{\b}{H^{r+\alpha+3}}^2
\le& C\norm{ \b}{H^3}\norm{\b}{H^{r+\alpha+3}}^2
\le C\delta\norm{\nabla\b}{H^{r+\alpha+3}}^2,
\end{align}
and
\begin{align}\label{ping28}
A\norm{\nabla \b}{L^\infty}\norm{\u}{H^{r+\alpha+3}}^2
&\le
CA\norm{ \nabla\b}{H^{r+\alpha+3}}\norm{\u}{H^{r+\alpha+3}}^2\nn\\
&\le  \frac {A}{2}\norm{ \nabla\b}{H^{r+\alpha+3}}^2+C\norm{\u}{H^{r+\alpha+3}}^4.
\end{align}
Taking \eqref{gan3}--\eqref{ping28} into \eqref{ping24} implies that
\begin{align}\label{ping28+1}
&\frac{d}{dt}\left\{A(\norm{\u}{H^{r+\alpha+3}}^2+\norm{\b}{H^{r+\alpha+3}}^2)-\sum_{0\le s\le r+\alpha+2}\big\langle{D^s}\b,{D^s}({\n}\cdot\nabla \u)\big\rangle\right\}\nn\\
&\qquad+A\norm{\nabla\b}{H^{r+\alpha+3}}^2+\norm{{\n}\cdot\nabla \u}{H^{r+\alpha+2}}^2\nn\\
&\quad\le C(A^2+1)\norm{\u}{H^{r+\alpha+3}}^4+ CA\delta\norm{\nabla\b}{H^{r+\alpha+3}}^2.
\end{align}

For any $N\ge 2r+\alpha+4$, by the classical interpolation inequality in the Sobolev space, there holds
\begin{align}\label{ping25}
\norm{ \u}{H^{r+\alpha+3}}\le\norm{ \u}{H^{\alpha+2}}^{\frac12}\norm{ \u}{H^{{N}}}^{\frac12}\le C\delta^{\frac12}\norm{{\n}\cdot\nabla \u}{H^{r+\alpha+2}}^{\frac12}.
\end{align}

As a result,  we get from \eqref{ping28+1} that
\begin{align}\label{ping30}
&\frac{d}{dt}\left\{A(\norm{\u}{H^{r+\alpha+3}}^2+\norm{\b}{H^{r+\alpha+3}}^2)-\sum_{0\le s\le r+\alpha+2}\big\langle{D^s}\b,{D^s}({\n}\cdot\nabla \u)\big\rangle\right\}\nn\\
&\qquad+A\norm{\nabla\b}{H^{r+\alpha+3}}^2+\norm{{\n}\cdot\nabla \u}{H^{r+\alpha+2}}^2\nn\\
&\quad\le C(A^2+1)\delta^2\norm{{\n}\cdot\nabla \u}{H^{r+\alpha+2}}^2+CA\delta\norm{\nabla\b}{H^{r+\alpha+3}}^2.
\end{align}
Define
\begin{align*}%\label{ping31}
{\mathcal{D}(t)}=&A\norm{\nabla\b}{H^{r+\alpha+3}}^2+\norm{{\n}\cdot\nabla \u}{H^{r+\alpha+2}}^2,\nn\\
{\mathcal{E}(t)}=&A(\norm{\u}{H^{r+\alpha+3}}^2+\norm{\b}{H^{r+\alpha+3}}^2)-\sum_{0\le s\le r+\alpha+2}\big\langle{D^s}\b,{D^s}({\n}\cdot\nabla \u)\big\rangle.
\end{align*}
Taking $A>1$ large enough such that
$${\mathcal{E}(t)}\ge\norm{\u}{H^{r+\alpha+3}}^2+\norm{\b}{H^{r+\alpha+3}}^2.$$
Then, choosing $\delta>0$ small enough, we can get from \eqref{ping30} that
\begin{align}\label{ping33}
\frac{d}{dt}{\mathcal{E}(t)}+\frac12{\mathcal{D}(t)}\le 0.
\end{align}
For any ${N}\ge (2\beta+3)r+\alpha+2\beta+5$, we invoke the interpolation inequality
\begin{align*}%\label{ming14}
	\norm{ \u}{H^{r+\alpha+3}}\le&\norm{ \u}{H^{\alpha+2}}^{\frac{2(\beta+1)}{2\beta+3}}\norm{ \u}{H^{{N}}}^{\frac{1}{2\beta+3}}\le C\delta^{\frac{1}{2\beta+3}}\norm{{\n}\cdot\nabla \u}{H^{r+\alpha+2}}^{\frac{2(\beta+1)}{2\beta+3}}
\end{align*}
which further implies that
\begin{align*}%\label{ping35}
{\mathcal{E}(t)}\le& C(\norm{\u}{H^{r+\alpha+3}}^2+\norm{\b}{H^{r+\alpha+3}}^2)\nn\\
\le& C\norm{ \u}{H^{\alpha+2}}^{\frac{4(\beta+1)}{2\beta+3}}\norm{ \u}{H^{{N}}}^{\frac{2}{2\beta+3}}+C\norm{ \b}{H^{\alpha+2}}^{\frac{4(\beta+1)}{2\beta+3}}\norm{ \b}{H^{{N}}}^{\frac{2}{2\beta+3}}\nn\\
\le& C\delta^{\frac{2}{2\beta+3}}\norm{{\n}\cdot\nabla \u}{H^{r+\alpha+2}}^{\frac{4(\beta+1)}{2\beta+3}}+C\delta^{\frac{2}{2\beta+3}}\norm{\nabla \b}{H^{r+\alpha+2}}^{\frac{4(\beta+1)}{2\beta+3}}\nn\\
\le&C(D(t))^{{\frac{2(\beta+1)}{2\beta+3}}}.
\end{align*}
So, we get a  Lyapunov-type differential inequality
\begin{align*}%\label{ping36}
\frac{d}{dt}{\mathcal{E}(t)}+c({\mathcal{E}(t)})^{{\frac{2\beta+3}{2(\beta+1)}}}\le 0.
\end{align*}

Solving this inequality yields
\begin{align}\label{ping37}
{\mathcal{E}(t)}\le C(1+t)^{-2(\beta+1)}.
\end{align}
Taking $m={N}$ in \eqref{3.16} and using the embedding relation give
\begin{align}\label{ping38}
&\frac{d}{dt}(\norm{\u}{H^{N}}^2+\norm{\b}{H^{N}}^2)+\norm{\nabla\b}{H^{N}}^2\le C(\norm{ \u}{H^3}+\norm{\b}{H^3})(\norm{\u}{H^{N}}^2+\norm{\b}{H^{N}}^2).
\end{align}
From \eqref{ping37}, we have
\begin{align*}
\int_0^t(\norm{ \u(\tau)}{H^3}+\norm{ \b(\tau)}{H^3})\,d\tau\le C,
\end{align*}
thus, exploiting
the Gronwall inequality to \eqref{ping38} implies
\begin{align*}%\label{ping39}
\norm{\u}{H^{N}}^2+\norm{\b}{H^{N}}^2
\le&C(\norm{\u_0}{H^{N}}^2+\norm{\b_0}{H^{N}}^2)
\le C\varepsilon^2.
\end{align*}
Taking $\varepsilon$ small enough so that $C\varepsilon\le \delta/2$, we deduce from a continuity argument that the local solution
can be extended as a global one in time.

Moreover, from \eqref{ping37}, we also have the following decay rate
\begin{align*}%\label{ping40}
\norm{\u(t)}{H^{r+\alpha+3}}+\norm{\b(t)}{H^{r+\alpha+3}}\le C(1+t)^{-(\beta+1)}.
\end{align*}
Thus,
for any $\gamma> r+\alpha+3$, choosing ${N}>\gamma$ and  using the following interpolation inequality\begin{align*}%\label{ming22}
	\norm{f(t)}{H^{\gamma}}\le\norm{f(t)}{H^{r+\alpha+3}}^{\frac{{N}-\gamma}{{N}-r-\alpha-3}}
	\norm{f(t)}{H^{N}}^{\frac{\gamma-r-\alpha-3}{{N}-r-\alpha-3}}.
\end{align*}
we can get the decay rate for the higher order energy
\begin{align*}%\label{ping44}
\norm{\u(t)}{H^{\gamma}}+\norm{\b(t)}{H^{\gamma}}\le C(1+t)^{-\frac{({N}-\gamma)(\beta+1)}{{N}-r-\alpha-3}}.
\end{align*}
This completes the proof of Theorem \ref{dingli}.$\hspace{8.3cm}\square$

\section*{ Acknowledgments}
This work is  supported by the Guangdong Provincial Natural Science Foundation under grant 2022A1515011977 and the Science and Technology Program of Shenzhen under grant 20200806-104726001.

\bigskip
\noindent{Data availability statement:}
 Data sharing not applicable to this article as no datasets were generated or analysed during the
current study.

\end{document}